\newtheorem{theorem}{Theorem}[section]
\newtheorem{lemma}[theorem]{Lemma}
\newtheorem{definition}{Definition}[section]
\newtheorem{proposition}[theorem]{Proposition}
\newtheorem{remark}{Remark}[section]
\newtheorem{example}{Example}[section]
\newcommand{\barr}{\beta}
\newcommand{\ce}{\bar{e}}
\newcommand{\id}{\mathrm{id}}
\newcommand{\red}[1]{\overline{#1}}
\newcommand{\Hom}{\mathrm{Hom}}
\newcommand{\K}{\mathbb{K}}
\newcommand{\Ker}{\mathrm{Ker}}
\newcommand{\Tor}{\mathrm{Tor}}
\author{Ana Paula Santana and Ivan Yudin }
\address{CMUC, Department of Mathematics,
University of Coimbra,
 Coimbra, Portugal}
 \email{aps@mat.uc.pt\qquad yudin@mat.uc.pt}
 \dedicatory{Dedicated to Manuela Sobral}
 \keywords{stratifying ideal, twisted product, relative homological algebra}
\thanks{ This work was partially supported by the Centro de Matem\'atica da
Universidade de Coimbra (CMUC), funded by the European Regional
Development Fund through the program COMPETE and by the Portuguese
Government through the FCT - Funda\c{c}\~ao para a Ci\^encia e a Tecnologia
under the project  PEst-C/MAT/UI0324/2013.  }
\title{Stratifying ideals and twisted products. }
 \def\labelauthor{APSIY} 
\begin{document}

\begin{abstract} We study stratifying ideals for rings in the context of relative homological algebra.
Using $LU$-decompositions, which are
a special type of  twisted products, we give a sufficient condition for an idempotent ideal  to be (relative) stratifying.

\end{abstract}
	\maketitle

	\section{Introduction}\label{\labelauthor:Introduction}
	\setcounter{equation}{0}
The notion of stratifying ideal was introduced, almost simultaneously, in several
articles, although under different names. The first reference we could find to these ideals is
 \cite{\labelauthor:bib:cps88}, where they are considered, without any proper designation, in the context of quasi-hereditary
algebras. Then they were studied in \cite{\labelauthor:bib:auslander} under the name of
\emph{strong idempotent ideals}. Almost simultaneously, in \cite{\labelauthor:bib:geigle}, the notion of
homological epimorphism was introduced. Stratifying ideals are exactly the
kernels of surjective homological epimorphisms of rings.  The term \emph{stratifying ideal} seems
to appear for the first time in \cite{\labelauthor:bib:cps96}.

Our interest in stratifying ideals
was motivated by the problem of constructing minimal projective resolutions. In fact, let $\Lambda$
be a finite dimensional algebra over a field. One
of the many ways to define a stratifying ideal of $\Lambda$ is the following. Given an idempotent $e$  in $\Lambda$, denote by $J$ the
ideal $\Lambda e \Lambda$ and by
$\red{\Lambda}$ the quotient
$\left.\raisebox{0.3ex}{$\Lambda$}\middle/\!\raisebox{-0.3ex}{$J$}\right. $.
The ideal $J$ is stratifying if, for any $\red{\Lambda}$-module $M$ and
projective resolution $P_\bullet \to M$ of $M$  over $\Lambda$, the complex
$\red{\Lambda}\otimes_{\Lambda} P_\bullet \to M$ is a projective resolution of $M$ over
$\red{\Lambda}$. Moreover, if $P_\bullet\to M$ is minimal, then the same is true for
$\red{\Lambda}\otimes_{\Lambda} P_\bullet \to M$.
Therefore one can construct minimal projective resolutions over $\red{\Lambda}$ by
constructing them first over $\Lambda$ and then applying the functor $\red{\Lambda}\otimes_\Lambda
-$.

It is usually quite difficult to verify if a given ideal of $\Lambda$ is
stratifying. It is well known that hereditary ideals are
stratifying. More generally, idempotent ideals of $\Lambda$ which are projective left $\Lambda$-modules are stratifying.

The aim of this paper is to give a new sufficient condition, Theorem~\ref{\labelauthor:teo:main}, for an idempotent ideal $\Lambda e \Lambda$ to be stratifying.
This result will be used in our work on homological properties of (quantised)
Schur algebras (see \cite{\labelauthor:bib:advances, \labelauthor:bib:quantised}).

The paper is organized as follows. In Section~\ref{\labelauthor:hom} we give a short overview
of relative homological algebra over rings with identity, and  define the bar resolution and \emph{relative} stratifying ideals in this context.
We also relate our definition of relative stratifying ideal to the usual definition of stratifying ideal of a
 finite dimensional algebra over a field.

In the first part of Section~\ref{\labelauthor:twisted} we define twisted products for
relative pairs.  Proposition~\ref{\labelauthor:pro:indbar}  relates twisted products and bar
resolutions. The second part of Section~\ref{\labelauthor:twisted} is dedicated to
$LU$-decompositions of a ring $A$ with a fixed idempotent $e$. In
Theorem~\ref{\labelauthor:teo:main}  we prove that $AeA$
is a relative stratifying ideal if $A$ admits an $LU$-decomposition.

\section{Relative homological algebra}\label{\labelauthor:hom}
\setcounter{equation}{0}

In this section we recall the  definitions and  results in
relative homological theory that we will use in the article. All these
notions and results are given in terms of left modules, but they can also
be applied to right modules, with appropriate changes in formulae if
necessary. By an $A$-module we mean a  left $A$-module and we write $A$-mod  for the category of left $A$-modules.

Relative homological theory was originally developed in \cite{\labelauthor:bib:hochschild} and we follow the terminology used in this work. A
more detailed (and slightly more general) treatment of this topic can be found in Chapter VIII
of \cite{\labelauthor:bib:homology}.

Let $A$ be a ring with identity $1$ and $S$ a subring of $A$ containing
$1$. We will refer to  $(A,S)$ as a
\emph{relative pair}.
An exact sequence of
$A$-modules will be called \emph{$(A,S)$-exact} if the kernel of every differential is
an $S$-direct summand of the corresponding object.
Equivalently a complex of $A$-modules
\begin{equation*}
	\dots \rightarrow M_{k} \xrightarrow{d_k}  M_{k-1} \rightarrow \dots
\end{equation*}
is $(A,S)$-exact if there are $S$-homomorphisms $s_k\colon M_{k} \to M_{k+1}$
such that $d_{k+1} s_k + s_{k-1} d_k = \id_{M_k}$ for all meaningful values of  $k$.

We say that an $A$-module $P$ is  \emph{$(A,S)$-projective} if for every short
$(A,S)$-exact sequence
\begin{equation}
	\label{\labelauthor::eq:ses}
	0 \rightarrow X \rightarrow M \xrightarrow{f} N \rightarrow 0
\end{equation}
and every $A$-homomorphism $g\colon P\to N$ there is an $A$-homomorphism
${h\colon P\to M} $ such that the diagram
\begin{equation*}
	\xymatrix{
	& & & P \ar[ld]_-{h} \ar[d]^-{g}\\
	 0 \ar[r] & X \ar[r] & M \ar[r]^-{f} & N \ar[r] & 0
	}
\end{equation*}
commutes.
In other words, $P$ is  $(A,S)$-projective if for every short {$(A,S)$-exact
sequence} \eqref{\labelauthor::eq:ses}, the map
\begin{equation*}
	\Hom_{A}(P,f) \colon \Hom_{A}(P,M) \to \Hom_{A}(P,N)
\end{equation*}
	is surjective.

\begin{remark}
	\label{\labelauthor::rem:semisimple}
	Obviously every projective $A$-module is $(A,S)$-projective.
	On the other hand,
if $S$ is a semisimple ring, then every
$(A,S)$-projective module is projective. In fact, in this case, every exact
sequence is automatically $(A,S)$-exact. Therefore  the condition for an $A$-module to be
$(A,S)$-projective coincides with the condition for it to be projective.
\end{remark}

\begin{example}
	\label{\labelauthor:ex:free}
	Let $V$ be an $S$-module. Then by Lemma~2 in \cite{\labelauthor:bib:hochschild} and subsequent considerations, the
	$A$-module $A\otimes_S V$ is $(A,S)$-projective. Moreover, an $A$-module $M$  is $(A,S)$-projective if and only if it is isomorphic to a
	direct $A$-module summand of  $A\otimes_S V$, for some $S$-module $V$.
Modules of the form $A\otimes_S V$ will be called
	\emph{$(A,S)$-free}.
\end{example}

It is interesting to note that $(A,S)$-projective modules behave well under change of base rings.
\begin{lemma}
	\label{\labelauthor:lem:proj}
Let $(A,S)$ and $(R,D)$ be relative pairs, and $\phi\colon A\to R$ a
homomorphism of rings such that $\phi(S)\subset D$ and $\phi(1)=1$. Suppose $P$ is an $(A,S)$-projective module.
Then $R\otimes_A P$ is an $(R,D)$-projective module.
\end{lemma}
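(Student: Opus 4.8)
The plan is to deduce the lemma from the extension–restriction of scalars adjunction. Write $\mathrm{Res}$ for the restriction of scalars functor from $R$-mod to $A$-mod along $\phi$; it is well defined since $\phi(1)=1$, and because $\phi(S)\subseteq D$ it sends $D$-modules to $S$-modules and $D$-homomorphisms to $S$-homomorphisms. Two observations drive the argument. First, $\mathrm{Res}$ carries $(R,D)$-exact sequences to $(A,S)$-exact sequences: using the contracting-homotopy description of relative exactness recalled above, a family of $D$-homomorphisms $s_k$ with $d_{k+1}s_k+s_{k-1}d_k=\mathrm{id}$ becomes, after applying $\mathrm{Res}$, a family of $S$-homomorphisms with the same property (concretely, for a short sequence it suffices to note that a $D$-linear splitting of $f$ is in particular $S$-linear). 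Second, $R\otimes_A-$ is left adjoint to $\mathrm{Res}$, i.e. there is a natural isomorphism $\Hom_R(R\otimes_A P,-)\cong\Hom_A(P,\mathrm{Res}\,-)$.

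Granting these, I would argue as follows. Let $0\to X\to M\xrightarrow{f} N\to 0$ be a short $(R,D)$-exact sequence and let $g\colon R\otimes_A P\to N$ be an $R$-homomorphism. Under the adjunction isomorphism, $g$ corresponds to an $A$-homomorphism $\tilde g\colon P\to\mathrm{Res}\,N$. By the first observation the restricted sequence $0\to\mathrm{Res}\,X\to\mathrm{Res}\,M\to\mathrm{Res}\,N\to 0$ is $(A,S)$-exact, so the $(A,S)$-projectivity of $P$ provides an $A$-homomorphism $\tilde h\colon P\to\mathrm{Res}\,M$ with $(\mathrm{Res}\,f)\,\tilde h=\tilde g$. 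Let $h\colon R\otimes_A P\to M$ be the $R$-homomorphism corresponding to $\tilde h$. Naturality of the adjunction in the target variable shows that $f h$ corresponds to $(\mathrm{Res}\,f)\,\tilde h=\tilde g$, and since the correspondence is a bijection we conclude $f h=g$. Hence $\Hom_R(R\otimes_A P,f)$ is surjective for every short $(R,D)$-exact sequence, which is precisely the assertion that $R\otimes_A P$ is $(R,D)$-projective.

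I do not anticipate a genuine obstacle; the only point requiring a little care is the first observation — that restriction of scalars along $\phi$ preserves relative exactness — and this is where the hypothesis $\phi(S)\subseteq D$ is used. As an alternative, purely in terms of the description in Example~\ref{\labelauthor:ex:free}, one can write $P$ as a direct $A$-summand of some $(A,S)$-free module $A\otimes_S V$, so that $R\otimes_A P$ is a direct $R$-summand of $R\otimes_A(A\otimes_S V)\cong R\otimes_S V\cong R\otimes_D(D\otimes_S V)$ (the $S$-action on $R$ and on $D$ being taken through $\phi$); the last module is $(R,D)$-free, and a direct summand of an $(R,D)$-projective module is again $(R,D)$-projective, so the claim follows. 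Either route is short; I would present the adjunction argument as the main one.
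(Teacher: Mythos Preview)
Your proposal is correct. The paper, however, does not use the adjunction argument you present as your main route; it uses precisely what you sketch as your ``alternative'': write $P$ as a direct $A$-summand of $A\otimes_S V$, apply the additive functor $R\otimes_A-$, and rewrite $R\otimes_A A\otimes_S V\cong R\otimes_S V\cong R\otimes_D D\otimes_S V$ to exhibit $R\otimes_A P$ as a summand of an $(R,D)$-free module. So your secondary argument \emph{is} the paper's proof verbatim. Your primary argument via the extension--restriction adjunction is a genuinely different and equally valid approach: it works directly from the lifting-property definition of $(A,S)$-projectivity rather than from the summand-of-free characterisation in Example~\ref{\labelauthor:ex:free}. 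The adjunction route has the small advantage of making transparent exactly where the hypothesis $\phi(S)\subset D$ enters (it is needed so that a $D$-linear contracting homotopy restricts to an $S$-linear one), while the paper's route is slightly shorter once Example~\ref{\labelauthor:ex:free} is in hand.
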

\begin{proof}
	We know from Example~\ref{\labelauthor:ex:free} that $P$ is isomorphic to a direct summand of
	$A\otimes_S V$, for some $S$-module $V$. Since the functor $R\otimes_A -$
	is additive, the $R$-module $R\otimes_A P$ is isomorphic to a direct summand of
	the free $(R,D)$-module
	\begin{equation*}
	R\otimes_A A \otimes_S V \cong R\otimes_S V \cong R\otimes_D D \otimes_S
	V.
	\end{equation*}
	This shows that $R\otimes_A P$ is $(R,D)$-projective.
\end{proof}
An $(A,S)$-exact sequence of left $A$-modules
\begin{equation*}
	\dots \rightarrow P_k \xrightarrow{d_k} P_{k-1} \rightarrow \dots
	\rightarrow P_1 \xrightarrow{d_1} P_0 \xrightarrow{d_0} M \rightarrow
	0
\end{equation*}
is called an \emph{$(A,S)$-projective resolution} of $M\in A$-mod if each $P_k$ is an $(A,S)$-projective  module.

Next we describe the bar resolution $B(A,S,M)$ of $M\in A$-mod. This construction will provide an $(A,S)$-projective resolution for $M$.
We set
\begin{equation*}
	B_{-1}\left(
	A, S,M \right)= M  \,\,\, \mbox{and}\,\,\,  B_k\left(A, S,M \right) =
	A^{\otimes_S^{(k+1)}
	} \otimes_S M, \,\,\, \mbox{all}\,\,\,k\ge 1,
\end{equation*}
where $A^{\otimes_S^l}$ stands for the $l$th tensor power of $A$ over $S$.
Now we define $A$-module homomorphisms $d_{kj}\colon B_{k}\left( A,S,M
\right)\to B_{k-1}\left( A,S,M \right) $, $0\le j\le k$,
and $S$-module homomorphisms $s_k\colon B_k\left( A,S,M \right)\to B_{k+1}\left(
A,S,M \right)$ by
\begin{align*}
	d_{0,0} \left( a\otimes m \right) & = am,\\
	d_{k,0} \left( a_0\otimes a_1 \otimes \dots \otimes a_k\otimes m \right)
	&= a_0 a_1 \otimes a_2 \otimes \dots \otimes a_k \otimes
	m,\\
	d_{k,j} \left( a_0\otimes a_1 \otimes \dots \otimes a_k\otimes m \right)
	& =  a_0\otimes a_1 \otimes \dots \otimes a_j a_{j+1} \otimes \dots \otimes a_k \otimes m,
	\\&\makebox[15em]{}	 1\le j\le k-1,\\
	d_{k,k} \left( a_0\otimes a_1 \otimes \dots \otimes a_k\otimes m \right)
	&= a_0\otimes a_1\otimes \dots \otimes a_{k-1} \otimes a_k m,\\
	s_{-1} \left( m \right) & = 1\otimes m,\\
	s_k \left( a_0\otimes a_1 \otimes \dots \otimes a_k \otimes m \right) &=
	1 \otimes   a_0  \otimes a_1 \otimes
	\dots \otimes a_k \otimes m,\  0 \le k.
\end{align*}
Define $d_k\colon B_k(A,S,M) \to B_{k-1}(A,S,M)$ by $d_k = \sum_{t=0}^k
(-1)^t d_{k,t}$. Then one can verify (cf. \cite[Section~3]{\labelauthor:bib:advances}) that
\begin{align}
	\label{\labelauthor:eq:contraction0}
		d_0 s_{-1} & = \id_{B_{-1}(A,S,M)},\\
		\label{\labelauthor:eq:contraction}
		d_{k+1} s_k + s_{k-1} d_k & = \id_{B_{k}(A,S,M)},\ k\ge 0,\\
		\label{\labelauthor:eq:differential}
		d_{k} d_{k+1}& =0, \ k\ge 0.
\end{align}
We have the following result.
\begin{proposition}
	\label{\labelauthor:pro:bar}
	Let $(A,S)$ be a relative pair and $M$  a left $A$-module.
	Then the  complex $B(A,S,M)=(B_{k}(A,S,M), d_k)_{k\ge -1}$ is an $(A,S)$-projective
	resolution of $M$.
\end{proposition}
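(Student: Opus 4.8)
The plan is to verify directly that the data defined before the statement exhibits $B(A,S,M)$ as an $(A,S)$-projective resolution of $M$, which amounts to checking three things: that each $B_k(A,S,M)$ is $(A,S)$-projective, that $(B_k(A,S,M),d_k)_{k\geq -1}$ is a complex, and that it is $(A,S)$-exact.

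First I would observe that for $k\geq 1$ we have $B_k(A,S,M)=A^{\otimes_S^{(k+1)}}\otimes_S M = A\otimes_S\bigl(A^{\otimes_S^{k}}\otimes_S M\bigr)$, so $B_k(A,S,M)$ is $(A,S)$-free in the sense of Example~\ref{\labelauthor:ex:free} with $V=A^{\otimes_S^{k}}\otimes_S M$ regarded merely as an $S$-module; in particular it is $(A,S)$-projective. (The object $B_{-1}=M$ is not required to be projective, since in a resolution only the terms $P_k$ with $k\geq 0$ must be $(A,S)$-projective, and $P_0=B_0(A,S,M)=A\otimes_S M$ is again $(A,S)$-free.)

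Next, the fact that $(B_k(A,S,M),d_k)$ is a complex, i.e. $d_k d_{k+1}=0$ for $k\geq 0$, together with the contracting homotopy identities $d_0 s_{-1}=\id_{B_{-1}(A,S,M)}$ and $d_{k+1}s_k+s_{k-1}d_k=\id_{B_k(A,S,M)}$ for $k\geq 0$, are exactly the relations \eqref{\labelauthor:eq:contraction0}, \eqref{\labelauthor:eq:contraction} and \eqref{\labelauthor:eq:differential}, which the excerpt already grants us (with the simplicial-identity verification referred to \cite[Section~3]{\labelauthor:bib:advances}). Since each $s_k$ is an $S$-module homomorphism and the identities $d_{k+1}s_k+s_{k-1}d_k=\id$ hold for all meaningful $k$, the augmented complex
\begin{equation*}
	\dots \to B_k(A,S,M)\xrightarrow{d_k} B_{k-1}(A,S,M)\to\dots\to B_0(A,S,M)\xrightarrow{d_0} M\to 0
\end{equation*}
admits the $S$-linear contracting homotopy $(s_k)_{k\geq -1}$, which by the characterisation of $(A,S)$-exactness recalled in Section~\ref{\labelauthor:hom} means precisely that this complex is $(A,S)$-exact. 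Combining this with the projectivity of the terms established above yields that $B(A,S,M)$ is an $(A,S)$-projective resolution of $M$.

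There is no real obstacle here: the proposition is essentially a bookkeeping assembly of facts that have been prepared immediately beforehand. The only point that requires a moment of care is making explicit that an $S$-split exact complex is the same thing as an $(A,S)$-exact complex — but this is immediate from the definition of $(A,S)$-exactness given at the start of Section~\ref{\labelauthor:hom} — and that the contracting homotopy maps $s_k$ produced above are indeed the ones featuring in that definition. Accordingly the proof will be short, citing Example~\ref{\labelauthor:ex:free} for projectivity and the identities \eqref{\labelauthor:eq:contraction0}--\eqref{\labelauthor:eq:differential} for the complex and exactness properties.
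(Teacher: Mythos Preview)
Your proposal is correct and follows essentially the same approach as the paper: cite Example~\ref{\labelauthor:ex:free} to get $(A,S)$-projectivity of each $B_k$ for $k\ge 0$, use~\eqref{\labelauthor:eq:differential} for the complex property, and use the $S$-linear contracting homotopy~\eqref{\labelauthor:eq:contraction0}--\eqref{\labelauthor:eq:contraction} to conclude $(A,S)$-exactness. The paper's proof is just a terser version of what you wrote.
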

\begin{proof}
	From Example~\ref{\labelauthor:ex:free}, we know that all the modules $B_k(A,S,M)$, for
	$k\ge 0$, are $(A,S)$-projective. From~\eqref{\labelauthor:eq:differential}, it follows
	that $B(A,S,M)$ is a complex. 
	From~\eqref{\labelauthor:eq:contraction0}
	and
	\eqref{\labelauthor:eq:contraction}, we get 
	that $B(A,S,M)$ is contractible as a complex of left $S$-modules.
\end{proof}
The $(A,S)$-projective resolution $B(A,S,M)$  is called the
\emph{bar resolution} for~$M$.
We will write $\barr (A,S,M)$ for the complex obtained from $B(A,S,M)$ by
deleting the term $B_{-1}(A,S,M)$.

The bar resolution for  a right $A$-module $N$ is defined in a similar way to
the one described above. It will be denoted by $\red{B} (N,S,A)$.  In
~\cite[Corollary IX.8.2]{\labelauthor:bib:homology}, there it is proved that $B(A,S,A) \cong
\overline{B}(A,S,A)$ and
\begin{equation}\label{\labelauthor:eq:rightleft}
B(A,S,M) \cong B(A,S,A) \otimes_A M \,\,\, \mbox{and}\,\,\,  \red{B} (N,S,A) \cong N\otimes_A B(A,S,A).
\end{equation}

Using the bar resolution, it is possible to define relative Tor-groups
(cf.~\cite[(IX.8.5)]{\labelauthor:bib:homology}).
Suppose we are given a  left $A$-module $M $  and a right $A$-module $N$. Then
the   \emph{relative $\Tor$-groups} are defined as

\begin{equation*}
	\Tor^{(A,S)}_k(N,M) = H_k \left( N\otimes_A \barr(A,S,A)\otimes_A M \right).
\end{equation*}
Suppose $P_\bullet\to M\to 0$ and $Q_\bullet \to N\to 0$ are $(A,S)$-projective
resolutions. Then, by Theorem~IX.8.5  in \cite{\labelauthor:bib:homology}, we have
\begin{equation}
	\label{\labelauthor:eq:toresolution}
	\Tor^{(A,S)}_k (N,M) \cong H_k \left( N \otimes_A P_\bullet \right)
	\cong H_k\left(  Q_\bullet\otimes_A M \right).
\end{equation}
\begin{remark}
	\label{\labelauthor:rem:Torsemisimple} In case $S$ is a semisimple ring, we have $	\Tor^{(A,S)}_k (N,M) \cong 	\Tor^A_k (N,M)$ for all $k\geq 0$. In fact, for $S$ semisimple, $P_\bullet\to M\to 0$ is an $(A,S)$-projective
resolution of $M$ if and only if  it is a projective resolution of $M$ as
$A$-module (see Remark~\ref{\labelauthor::rem:semisimple}).
\end{remark}
It is now possible to introduce the notion of
$(A,S)$-stratifying ideal.
Given a relative pair $(A,S)$  and an  idempotent $e\in S$, we write $\red{A} :=
A/AeA$.

\begin{definition}
	The ideal $AeA$ is called  \emph{$(A,S)$-stratifying} if
	${\Tor^{(A,S)}_k (\red{A},\red{A})=0}$, for all $k \geq 1$.
\end{definition}
This definition of $(A,S)$-stratifying ideal is closely connected with the definition of stratifying ideal given in \cite{\labelauthor:bib:cps96}. In fact, in many situations they are equivalent.

\begin{proposition}
	\label{\labelauthor:pro:ttt}
	Let $(A,S)$ be a relative pair with $A$ a finite dimensional
	algebra over a field. Suppose that  $S$ is  a semisimple algebra and  $e\in S$ is an idempotent. Then  the following conditions are equivalent.
	\begin{enumerate}
	\item The ideal  $AeA$ is
	$(A,S)$-stratifying;	\item  $AeA$ is a strong idempotent ideal in the sense of
			\cite{\labelauthor:bib:auslander};
		\item $A\to \red{A}$ is a homological epimorphism in the sense
			of \cite{\labelauthor:bib:geigle};
		\item $AeA$ is a stratifying ideal in the sense of
			\cite{\labelauthor:bib:cps96}.
	\end{enumerate}
\end{proposition}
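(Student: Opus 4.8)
The plan is to show that all four conditions are reformulations of the single statement that the natural map $A \to \red{A}$ becomes an isomorphism after applying suitable derived functors, exploiting that for $S$ semisimple the relative theory collapses to the absolute one. The key observation, recorded in Remark~\ref{\labelauthor:rem:Torsemisimple}, is that since $S$ is semisimple we have $\Tor^{(A,S)}_k(\red{A},\red{A}) \cong \Tor^A_k(\red{A},\red{A})$ for all $k$. So condition (1) is equivalent to $\Tor^A_k(\red{A},\red{A}) = 0$ for all $k \geq 1$, and from this point on $S$ plays no further role; what remains is the classical equivalence for finite dimensional algebras.

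First I would establish $(1) \Leftrightarrow (2)$. One of the standard characterisations of a strong idempotent ideal $J = AeA$ (in the sense of \cite{\labelauthor:bib:auslander}) is precisely that the multiplication map induces isomorphisms $\Tor^A_k(\red A, \red A) \cong \Tor^{\red A}_k(\red A, \red A)$, equivalently $\Tor^A_k(\red A, \red A) = 0$ for $k \geq 1$ (the $k=0$ case being automatic since $\red A \otimes_A \red A \cong \red A$). I would cite the relevant result from \cite{\labelauthor:bib:auslander} and note that it matches the vanishing in condition (1) via Remark~\ref{\labelauthor:rem:Torsemisimple}. Next, $(2) \Leftrightarrow (3)$: a ring homomorphism $f\colon A \to B$ is a homological epimorphism in the sense of \cite{\labelauthor:bib:geigle} iff $B \otimes^{\mathbf L}_A B \cong B$ in the derived category, i.e. the multiplication $B \otimes_A B \to B$ is an isomorphism and $\Tor^A_k(B,B) = 0$ for $k \geq 1$; for $B = \red A = A/AeA$ the first condition is automatic and we recover exactly (2). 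Finally $(3) \Leftrightarrow (4)$ is by now essentially a matter of definitions: in \cite{\labelauthor:bib:cps96} a stratifying ideal is defined so that the quotient map is a homological epimorphism (equivalently, so that the derived functor comparison $\red A \otimes^{\mathbf L}_A - $ computes $\mathrm{Ext}$ and $\Tor$ over $\red A$); I would quote the precise definition from \cite{\labelauthor:bib:cps96} and observe it coincides with (3).

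A slightly more structured route, which I would probably follow in the write-up, is to prove the cycle $(1) \Rightarrow (2) \Rightarrow (3) \Rightarrow (4) \Rightarrow (1)$, so that each implication only requires unpacking one definition and invoking one cited characterisation, rather than proving biconditionals directly. The implication $(4) \Rightarrow (1)$ closes the loop and is the place where one again uses that $A \otimes_A \red A \cong \red A$ together with Remark~\ref{\labelauthor:rem:Torsemisimple} to translate the derived-category statement of \cite{\labelauthor:bib:cps96} back into vanishing of relative $\Tor$.

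The main obstacle is not mathematical depth but bookkeeping across four different sources with four different sets of conventions: \cite{\labelauthor:bib:auslander} works with one-sided Tor/Ext vanishing, \cite{\labelauthor:bib:geigle} phrases things via localisation and the derived tensor product, and \cite{\labelauthor:bib:cps96} uses the language of recollements and stratifications. I would need to be careful that the several a priori one-sided conditions (vanishing of $\Tor^A_k(\red A, \red A)$ versus $\Ext^k_A(\red A, \red A) \cong \Ext^k_{\red A}(\red A, \red A)$, left versus right modules) are genuinely equivalent under the finite-dimensionality and semisimplicity hypotheses; the cleanest way to handle this is to route everything through the single symmetric statement ``$\red A \otimes^{\mathbf L}_A \red A \simeq \red A$ in $D(\red A)$'' and show each of the four conditions is equivalent to it.
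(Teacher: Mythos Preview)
Your proposal is correct and follows essentially the same approach as the paper: reduce to absolute $\Tor$ via Remark~\ref{\labelauthor:rem:Torsemisimple}, then invoke the characterisations from \cite{\labelauthor:bib:auslander}, \cite{\labelauthor:bib:geigle}, and \cite{\labelauthor:bib:cps96} to link the four conditions. The paper organises the equivalences as $(1)\Leftrightarrow(2)$, $(1)\Leftrightarrow(3)$, $(3)\Leftrightarrow(4)$ with specific citations (Proposition~1.3(iv') of \cite{\labelauthor:bib:auslander}, Theorem~4.4(1) and 4.4(5') of \cite{\labelauthor:bib:geigle}, Remark~2.1.2(a) of \cite{\labelauthor:bib:cps96}), but this is only a minor difference in logical ordering from your biconditional chain or cycle.
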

\begin{proof}
If $S$ is semisimple, we know, from  Remark~\ref{\labelauthor:rem:Torsemisimple}, that $	\Tor^{(A,S)}_k (\red{A},\red{A}) \cong 	\Tor^A_k (\red{A},\red{A}), \,\,\,k\geq 0$.
Therefore, by Proposition~1.3(iv') in \cite{\labelauthor:bib:auslander}, the ideal $AeA$ is
	$(A,S)$-stratifying if and only if it is strong
idempotent in the sense of \cite{\labelauthor:bib:auslander}.

Since $A\to \red{A}$ is an epimorphism of rings, the multiplication map $\red{A} \otimes_A \red{A}\to \red{A}$ is an isomorphism. Then, from
 Theorem~4.4(1) in~\cite{\labelauthor:bib:geigle}, it follows that $A\to \red{A}$ is a
homological epimorphism if and only if $AeA$ is
	$(A,S)$-stratifying.

Finally, from Theorem~4.4(5') in \cite{\labelauthor:bib:geigle} and Remark~2.1.2(a)
in~\cite{\labelauthor:bib:cps96}, we get that $AeA$ is a stratifying ideal in the sense of
\cite{\labelauthor:bib:cps96} if and only if $A\to \red{A}$ is a
homological epimorphism.
\end{proof}

\section{Twisted products} \label{\labelauthor:twisted}
\setcounter{equation}{0}

\subsection{General definitions and bar resolution}
In this section we introduce the notion of a twisted product for relative pairs
and discuss bar resolutions in this setting.

\begin{definition}
	Let $(A,S)$ be a relative pair and  $A_1,$  $A_2$ subrings of
	$A$ containing $S$. We say that $(A,S)$ is  a \emph{twisted product} of
	$A_1$ and $A_2$ if the map $\alpha \colon A_1\otimes_S A_2\to A$ induced
	by the multiplication in $A$ is an isomorphism of abelian groups.
\end{definition}
If $(A,S)$ is a twisted product of $A_1$ and $A_2$, we can define the twisting map
$T\colon A_2 \otimes_S A_1 \to A_1 \otimes_S A_2$ as  the composition
\begin{equation*}
	A_2 \otimes_S A_1 \xrightarrow{\mu_A} A \xrightarrow{\alpha^{-1}} A_1\otimes_S A_2,
\end{equation*}
where $\mu_A$ is the multiplication in $A$.
The existence of this map motivated the name ``twisted product''.
\begin{example}
	Let $G$ be a group with identity $e_G$ and $H_1$, $H_2$ subgroups of $G.$ Suppose  that $H_1\cap H_2
	= \left\{ e_G \right\}$ and $H_1 H_2 = G$. Then one says that $G$ is a
	\emph{Zappa-Sz\'ep product} of $H_1$ and $H_2$. Given any commutative ring  with identity $S$, it can be  checked that  the relative pair $(SG,S)$ is a
	twisted product of $SH_1$ and $SH_2$.
	The Zappa-Sz\'ep product was developed independently by Zappa
	in~\cite{\labelauthor:bib:zappa} and Sz\'ep in~\cite{\labelauthor:bib:szep}.
\end{example}
	Suppose a relative pair $(A,S)$ is a twisted product of subrings $A_1$ and $A_2
	$.   Then the endofunctors $A\otimes_S -$, $A_1\otimes_S -$, and $A_2\otimes_S
	-$ on the category of $S$-mod can be turned into  monads using
	multiplication and units of algebras in the obvious way. 
Moreover, $T$
induces a natural transformation $\tau$ between the functors $A_2 \otimes_S A_1
\otimes_S - \to A_1\otimes_S A_2\otimes_S -$. One can check that $\tau$ is a
distributive law in the sense of \cite{\labelauthor:bib:beck}.
Twisted products of algebras were also studied in~\cite{\labelauthor:bib:cap}.

Suppose $(A,S)$ is a twisted product of subrings $A_1$ and $A_2$. Then we can consider
every $A$-module  as an $A_1$-module and every $A_2$-module as an
$S$-module. Thus, we have two functors $A\otimes_{A_2} -$ and $A_1\otimes_S -$
 from $A_2$-mod to $A_1$-mod.

\begin{lemma}
	\label{\labelauthor:lem:indiso}
	Suppose $(A,S)$ is a twisted product of subrings $A_1$ and $A_2$. Then the functors $A\otimes_{A_2} -$, $A_1\otimes_S -\colon
	A_2\mbox{-\emph{mod}} \to A_1\mbox{-\emph{mod}}$ are isomorphic.
\end{lemma}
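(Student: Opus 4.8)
The plan is to construct an explicit natural isomorphism between the two functors by exhibiting, for each $A_2$-module $X$, an isomorphism $\Phi_X\colon A_1\otimes_S X \to A\otimes_{A_2} X$ of $A_1$-modules and checking naturality. Since $(A,S)$ is a twisted product of $A_1$ and $A_2$, the multiplication map $\alpha\colon A_1\otimes_S A_2\to A$ is an isomorphism of abelian groups; it is moreover $A_1$-linear on the left and $A_2$-linear on the right, so $A$ is isomorphic, as an $(A_1,A_2)$-bimodule, to $A_1\otimes_S A_2$. Tensoring with $X$ over $A_2$ then gives, for every $A_2$-module $X$, an isomorphism of $A_1$-modules
\begin{equation*}
	A\otimes_{A_2} X \;\cong\; (A_1\otimes_S A_2)\otimes_{A_2} X \;\cong\; A_1\otimes_S (A_2\otimes_{A_2} X)\;\cong\; A_1\otimes_S X,
\end{equation*}
where the middle step is the standard associativity isomorphism and the last is the canonical isomorphism $A_2\otimes_{A_2} X\cong X$. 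On elements, $\Phi_X$ sends $a_1\otimes x$ to $\alpha(a_1\otimes 1)\otimes x = a_1\otimes x\in A\otimes_{A_2}X$, with inverse induced by $a\otimes x\mapsto (\id_{A_1}\otimes\varepsilon)(\alpha^{-1}(a)\otimes x)$, where $\varepsilon\colon A_2\otimes_{A_2}X\to X$ is the action map.

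The next step is to verify that $\Phi_X$ is $A_1$-linear: left multiplication by $a_1'\in A_1$ on $A_1\otimes_S X$ corresponds under $\alpha$ to left multiplication by $a_1'$ on $A\otimes_{A_2}X$, because $\alpha$ is a left $A_1$-module map and the $A_1$-action on $A\otimes_{A_2}X$ is by left multiplication on the first factor. Finally, naturality in $X$: given an $A_2$-homomorphism $f\colon X\to Y$, the square comparing $\Phi_X$ and $\Phi_Y$ commutes because each of the three isomorphisms in the chain above — the one coming from $\alpha$, associativity of tensor product, and $A_2\otimes_{A_2}-\cong\id$ — is natural in the module variable. Hence $\Phi\colon A_1\otimes_S -\Rightarrow A\otimes_{A_2}-$ is a natural isomorphism.

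I do not expect a serious obstacle here; the only point requiring a little care is keeping track of which module structure is used where — in particular that $\alpha$ is simultaneously a morphism of left $A_1$-modules and of right $A_2$-modules, which is exactly what lets the associativity isomorphism be applied and what makes $\Phi_X$ come out $A_1$-linear. One could phrase the whole argument slightly more abstractly by noting that $\alpha$ is an isomorphism of $(A_1,A_2)$-bimodules $A_1\otimes_S A_2\xrightarrow{\sim} A$ and then applying the functor $-\otimes_{A_2}X$, which is how I would write it to avoid element-chasing.
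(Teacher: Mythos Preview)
Your proof is correct and follows essentially the same approach as the paper: you build the natural isomorphism as the composite of the canonical identification $A_2\otimes_{A_2}X\cong X$, the associativity isomorphism, and $\alpha\otimes_{A_2}\id$, which is exactly the chain \eqref{\labelauthor:eq:indiso} the paper writes down (in the opposite direction). Your added remarks on $A_1$-linearity and naturality make explicit what the paper leaves implicit in the phrase ``three $A_1$-isomorphisms natural in $M$''.
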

\begin{proof}
Given any $A_2$-module $M$ we define $f_M$ as  the composition
of the three $A_1$-isomorphisms natural in $M$
\begin{equation}
	\label{\labelauthor:eq:indiso}
	 A_1\otimes_S M \xrightarrow{\cong} A_1\otimes_S \left(A_2 \otimes_{A_2}
	M\right) \xrightarrow{\cong} \left( A_1\otimes_S A_2 \right)
	\otimes_{A_2} M
	\xrightarrow{\alpha\otimes_{A_2} M} A\otimes_{A_2} M.
\end{equation}
Then $f:=(f_M)_{M\in A_2\mbox{-mod}}$ is the required isomorphism of fuctors.
\end{proof}

\begin{proposition}
	\label{\labelauthor:pro:indbar}
	Suppose that the relative pair $(A,S)$ is a twisted product of $A_1$ and
	$A_2$. Then for any $A_2$-module $M$ the complex $A\otimes_{A_2}
	B(A_2,S,M)$ is an $(A,S)$-projective resolution of $A\otimes_{A_2} M$.
\end{proposition}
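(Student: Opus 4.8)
The plan is to verify the two defining properties of an $(A,S)$-projective resolution for the complex $A\otimes_{A_2}B(A_2,S,M)$, whose term in degree $-1$ is $A\otimes_{A_2}M$: namely, that each term in nonnegative degree is $(A,S)$-projective, and that the augmented complex is $(A,S)$-exact, i.e.\ contractible as a complex of $S$-modules. That it is a complex of $A$-modules at all is automatic, since $A\otimes_{A_2}-$ is a functor and $B(A_2,S,M)$ is a complex by \eqref{\labelauthor:eq:differential}.

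For projectivity of the terms, recall that by Proposition~\ref{\labelauthor:pro:bar} the bar complex $B(A_2,S,M)$ is an $(A_2,S)$-projective resolution of $M$, so each $B_k(A_2,S,M)$ with $k\ge 0$ is $(A_2,S)$-projective. The inclusion $A_2\hookrightarrow A$ is a unital ring homomorphism carrying $S$ into $S$, so I would apply Lemma~\ref{\labelauthor:lem:proj} to the pairs $(A_2,S)$ and $(A,S)$ and this homomorphism to conclude directly that $A\otimes_{A_2}B_k(A_2,S,M)$ is $(A,S)$-projective for every $k\ge 0$.

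The exactness is the delicate point and the main obstacle. The bar complex $B(A_2,S,M)$ carries the contracting homotopy $(s_k)$ of \eqref{\labelauthor:eq:contraction0} and \eqref{\labelauthor:eq:contraction}, but the maps $s_k$ are only $S$-linear, not $A_2$-linear, so they cannot simply be pushed through the functor $A\otimes_{A_2}-$. I would circumvent this using Lemma~\ref{\labelauthor:lem:indiso}, which supplies a natural isomorphism $f$ between the functors $A\otimes_{A_2}-$ and $A_1\otimes_S-$ on $A_2$-mod. Since all the differentials $d_k$ of $B(A_2,S,M)$, as well as the augmentation $d_0\colon B_0(A_2,S,M)\to M$, are $A_2$-module homomorphisms, naturality of $f$ upgrades it to an isomorphism of augmented complexes of $A_1$-modules between $A_1\otimes_S B(A_2,S,M)$ and $A\otimes_{A_2}B(A_2,S,M)$. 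Now $A_1$ is an $(S,S)$-bimodule, so $A_1\otimes_S-$ is an additive endofunctor of $S$-mod; applying it to the $S$-contractible augmented complex $B(A_2,S,M)$ and using that additive functors carry contracting homotopies to contracting homotopies (here sending $(s_k)$ to $(A_1\otimes_S s_k)$) shows that $A_1\otimes_S B(A_2,S,M)$ is contractible as a complex of $S$-modules. Transporting this homotopy along the $S$-linear isomorphism $f$, the augmented complex $A\otimes_{A_2}B(A_2,S,M)$ is contractible as a complex of $S$-modules, hence $(A,S)$-exact. Together with the previous paragraph this shows that $A\otimes_{A_2}B(A_2,S,M)$ is an $(A,S)$-projective resolution of $A\otimes_{A_2}M$.
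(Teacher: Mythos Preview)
Your proof is correct and follows essentially the same approach as the paper: the exactness argument via Lemma~\ref{\labelauthor:lem:indiso} and the additivity of $A_1\otimes_S-$ is identical to the paper's. The only cosmetic difference is in the projectivity step, where the paper exhibits the explicit isomorphism $A\otimes_{A_2} A_2^{\otimes_S^{(k+1)}}\otimes_S M\cong A\otimes_S A_2^{\otimes_S^{k}}\otimes_S M$ to see the terms as $(A,S)$-free, while you invoke Lemma~\ref{\labelauthor:lem:proj} for the inclusion $A_2\hookrightarrow A$; both are equally valid.
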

\begin{proof}

For $k\ge 0$
we have isomorphisms of $A$-modules
\begin{equation*}
	A\otimes_{A_2} A_2^{\otimes_S ^{(k+1)}} \otimes_{S} M \cong A \otimes_S
	A_2^{\otimes_{S}^ k} \otimes_S M.
\end{equation*}
Therefore, by Example~\ref{\labelauthor:ex:free}, the modules $A\otimes_{A_2} B_k(A_2,S,M)$ are
$(A,S)$-projective for all $k\ge 0$.

By Lemma~\ref{\labelauthor:lem:indiso}, the functors $A\otimes_{A_2} -$ and $A_1\otimes_S -$
are isomorphic as functors from the category of $A_2$-modules to the category of $A_1$-modules, and so they are
also isomorphic as functors to the category of $S$-modules.
Therefore, to show that $A\otimes_{A_2} B(A_2,S,M)$ is splittable as a complex of
$S$-modules, it is enough to show that $A_1\otimes_S B(A_2,S,M)$ is splittable as
a complex of $S$-modules.
Since $\left.A_1\otimes_S-\right.$ is an additive endofunctor in the category of $S$-modules,
and $B(A_2,S,M)$ is a splittable exact sequence in this category,
we get that  ${A_1\otimes_S B(A_2,S,M)}$ is a splittable exact sequence of
$S$-modules.
This shows that $A\otimes_{A_2} B(A_2,S,M)$ is an $(A,S)$-projective resolution of
$A\otimes_{A_2} M$.
\end{proof}

\subsection{LU-twisted products and $(A,S)$-stratifying ideals}
Let $(A,S)$ be a relative pair and $e\in S$ an idempotent. We denote by
$\ce$ the idempotent $1-e$.
Given a subring $B$ of $A$ containing $S$ it is convenient to think of
$B$ as the matrix ring
\begin{equation*}
	\left(\begin{array}{cc}eBe & eB\ce \\ \ce B e & \ce B
		\ce\end{array}\right).
	\end{equation*}
	Note that $( eBe,e)$ and $(\ce B \ce,\ce)$ are rings.
We will say that $B$ is \emph{upper triangular} if $\ce B e = 0$, \emph{lower
triangular} if $e B\ce = 0$, and \emph{diagonal} if $\ce B e = e B \ce =0$.

We will write $\red{B}$ for the quotient of $B$ by the ideal $BeB$.
\begin{proposition}
	\label{\labelauthor:pro:upper}
Let $(A,S)$ be a relative pair and $e\in S$ an idempotent.
Suppose that $B$ is an upper or lower triangular subring of $A$. Then
$\red{B} \cong \ce B \ce$, where the isomorphism is induced by
the inclusion of $\ce B\ce $ into $B$.
\end{proposition}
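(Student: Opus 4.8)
The plan is to analyze the ideal $BeB$ explicitly in the matrix picture and exhibit a concrete isomorphism. Consider first the upper triangular case, so $\ce B e = 0$ and
\[
B = \begin{pmatrix} eBe & eB\ce \\ 0 & \ce B \ce \end{pmatrix}.
\]
The key observation is that $e\in B$, so the two-sided ideal $BeB$ contains all of $eBe$ (since $e x e = e\cdot(exe)\cdot e$ for $exe\in eBe$) and all of $eB\ce$ (since $ey\ce = e\cdot (ey\ce)$ already lies in $BeB$). Conversely, any element of $BeB$ is a sum of products $b_1 e b_2$ with $b_i\in B$; writing $b_i$ in matrix form and using $\ce B e = 0$, one checks that $b_1 e b_2$ has zero $\ce B\ce$-component and zero $\ce B e$-component. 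Hence
\[
BeB = \begin{pmatrix} eBe & eB\ce \\ 0 & 0 \end{pmatrix},
\]
which is precisely the set of matrices in $B$ with trivial lower row.

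First I would make the computation of $BeB$ rigorous as above, being careful that the matrix entries multiply the way block matrix multiplication predicts — this uses only associativity of multiplication in $A$ and the vanishing of $\ce B e$. Then the quotient $\red{B} = B/BeB$ is, as an abelian group, identified with the $\ce B\ce$-block, and the residue map sends $b\mapsto \ce b\ce$. Next I would observe that the inclusion $\iota\colon \ce B\ce \hookrightarrow B$ followed by the projection $B\twoheadrightarrow \red{B}$ is a ring homomorphism (it is clearly additive and multiplicative, and it sends $\ce$, the identity of $\ce B\ce$, to $\ce + BeB$, which is the identity of $\red{B}$ since $e\in BeB$ forces $1 = e + \ce \equiv \ce$). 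Finally I would check this composite is bijective: surjectivity is immediate from the description of $BeB$, and injectivity holds because $\ce B\ce \cap BeB = 0$ by the explicit form of $BeB$ just computed.

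The lower triangular case is entirely symmetric: now $eB\ce = 0$, and the same reasoning gives $BeB = \left(\begin{smallmatrix} eBe & 0 \\ \ce B e & 0\end{smallmatrix}\right)$, the matrices with trivial right column, so again $\red{B}\cong \ce B\ce$ via the inclusion. I expect the only mildly delicate point is bookkeeping in the computation of $BeB$ — making sure one really gets all of $eBe$ and $eB\ce$ (resp.\ $eBe$ and $\ce Be$) in the ideal and nothing outside the stated block — but this is a routine verification once the block multiplication rules are written down. Everything else is formal.
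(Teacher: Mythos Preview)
Your proof is correct and follows essentially the same route as the paper's: both establish surjectivity and injectivity of the composite $\ce B\ce \hookrightarrow B \twoheadrightarrow \red{B}$ using the Peirce decomposition and the vanishing of $\ce B e$. The only cosmetic difference is that the paper does not compute $BeB$ explicitly --- it argues surjectivity directly from $b = \ce b\ce + ebe + eb\ce$ and injectivity from the one-line inclusion $BeB \cap \ce B\ce \subset \ce B e B\ce = 0$ --- whereas you first identify $BeB$ with the ``top row'' and then read off both properties; the content is the same.
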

\begin{proof} We prove the proposition in the case when $B$ is an upper triangular ring. The lower triangular case is similar.

Let $b\in B$. Then $b = (\ce+e) b (\ce + e) = \ce b \ce + e be +e b \ce$. This
shows that $[b] = [\ce b \ce]$ in $\red{B}$. Thus the map $\phi\colon\ce B \ce \to \red{B}$,
$b \mapsto [b]$ is a surjective ring homomorphism.
To check that $\Ker(\phi) = 0$ it is enough to notice that
$BeB \cap \ce B \ce \subset \ce B e B \ce =0,$
since $\ce B e = 0$.
\end{proof}

\begin{definition}
	Let $(A,S)$ be a relative pair with 	$S$  diagonal and $e\in S$ an idempotent. We say that
	$A$ admits an $LU$-decomposition if there are subrings $L$ and $U$ of
	$A$ containing $S$ such that:
	\begin{enumerate}
		\item $(A,S)$ is a twisted product of $L$ and $U$;
		\item $L$ is lower triangular;
		\item $U$ is upper triangular.
	\end{enumerate}
\end{definition}
Before we state and prove the main theorem,  we  need two technical results. Their proofs use the following proposition, which can be found in
\cite{\labelauthor:bib:homology}.

\begin{proposition}
	\label{\labelauthor:pro:Mac}\rm{(IX.9.3
\cite{\labelauthor:bib:homology})} Suppose that the ring $R$ is the direct product of two subrings $R_1$ and $R_2$. Given a right $R$-module $N$ and a left $R$-module $M$, there is an isomorphism {of abelian groups}
\begin{equation*}
	N \otimes_R M \cong ( N \otimes_R R_1) \otimes_{R_1}( R_1\otimes_R M) \oplus ( N \otimes_R R_2) \otimes_{R_2}( R_2\otimes_R M).
\end{equation*}
\end{proposition}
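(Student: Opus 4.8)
The plan is to reduce everything to the central idempotent decomposition of $R$. Let $e_1=(1_{R_1},0)$ and $e_2=(0,1_{R_2})$ be the images in $R=R_1\times R_2$ of the two factor identities; these are orthogonal central idempotents with $e_1+e_2=1$, and the projection $\pi_i\colon R\to R_i$ identifies $R_i$ with the two-sided ideal $Re_i=e_iR$, which is a ring with identity $e_i$. First I would record the elementary natural isomorphisms of abelian groups
\begin{equation*}
	N\otimes_R R_i\cong Ne_i,\qquad R_i\otimes_R M\cong e_iM,
\end{equation*}
given on elementary tensors by $n\otimes r\mapsto nr$ and $r\otimes m\mapsto rm$ (with $r$ read as the corresponding element of the ideal $Re_i$); under these the two functors appearing in the statement become ``multiply by $e_i$''.

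Next I would use that $N=Ne_1\oplus Ne_2$ and $M=e_1M\oplus e_2M$ as abelian groups, compatibly with the $R$-action (each $Ne_i$ is an $R$-submodule since $e_i$ is central, and the sums are direct because $e_1,e_2$ are orthogonal). Additivity of $-\otimes_R-$ in each variable then gives
\begin{equation*}
	N\otimes_R M\;\cong\;\bigoplus_{i,j\in\{1,2\}}(Ne_i)\otimes_R(e_jM).
\end{equation*}
For $i\neq j$ the summand $(Ne_i)\otimes_R(e_jM)$ vanishes, since every elementary tensor satisfies $ne_i\otimes e_jm=n\otimes e_ie_jm=n\otimes 0=0$ by centrality of $e_i$; hence only the two diagonal terms survive.

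It then remains to identify $(Ne_i)\otimes_R(e_iM)$ with $(Ne_i)\otimes_{R_i}(e_iM)$. On both $Ne_i$ and $e_iM$ the idempotent $e_j$ (for $j\neq i$) acts as zero, so their $R$-module structures are precisely those obtained by restriction of scalars along the surjection $\pi_i$. For a surjective ring homomorphism the defining relations of the tensor product over the source and over the target literally coincide, so restricting scalars along $\pi_i$ does not change the tensor product; therefore $(Ne_i)\otimes_R(e_iM)\cong(Ne_i)\otimes_{R_i}(e_iM)$. Combining this with the isomorphisms of the first paragraph yields the asserted decomposition. There is no serious obstacle here; the only point requiring a little attention is this last step, namely checking that the $R$-balanced and $R_i$-balanced tensor products agree once $e_j$ acts trivially, and this is immediate from the universal property of the tensor product.
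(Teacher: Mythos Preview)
Your argument is correct. The paper does not actually prove this proposition; it merely quotes it from Mac~Lane's \emph{Homology} (IX.9.3), so there is no ``paper's own proof'' to compare against. Your approach via the central orthogonal idempotents $e_1,e_2$, the vanishing of the off-diagonal summands, and the identification of $-\otimes_R-$ with $-\otimes_{R_i}-$ on modules annihilated by $e_j$ is the standard route and is essentially what one finds in Mac~Lane.
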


\begin{proposition}
	\label{\labelauthor:pro:LU}
	Let $(A,S)$ be a relative pair with 	$S$  diagonal and $e\in S$ an idempotent. Suppose that
	$(A,S)$ admits an $LU$-decomposition with subrings $L$ and $U$.
	Then $\red{A}$
	is a twisted product of $\red{L}$ and $\red{U}$.
\end{proposition}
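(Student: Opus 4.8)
The plan is to start from the given $LU$-decomposition: $(A,S)$ is a twisted product of $L$ and $U$, so $\alpha\colon L\otimes_S U\to A$ is an isomorphism of abelian groups, with $L$ lower triangular, $U$ upper triangular, and $S$ diagonal. The statement to prove is that $\red A$ is a twisted product of $\red L$ and $\red U$, i.e. that the multiplication map $\bar\alpha\colon \red L\otimes_S\red U\to\red A$ is an isomorphism of abelian groups. (Note $S$ maps to a subring of $\red L$ and of $\red U$ containing the image of $1$, since $S$ is diagonal and $e\in S$, so $S\cap LeL=S\cap UeU=0$; this makes the tensor product over $S$ meaningful.) By Proposition~\ref{\labelauthor:pro:upper}, $\red L\cong\ce L\ce$ and $\red U\cong\ce U\ce$, both isomorphisms induced by inclusion, so it is equivalent to show that the multiplication map $\ce L\ce\otimes_S\ce U\ce\to\red A$ is an isomorphism. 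Since $S$ is diagonal, $S=eSe\oplus\ce S\ce$ as a ring, and $\ce L\ce$, $\ce U\ce$ are $\ce S\ce$-modules on which $e$ acts as zero; hence $\ce L\ce\otimes_S\ce U\ce=\ce L\ce\otimes_{\ce S\ce}\ce U\ce$. So the real task is to identify $\red A=A/AeA$ with $\ce L\ce\otimes_{\ce S\ce}\ce U\ce$.

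The first concrete step is to compute $AeA$ inside $A$ using the $LU$-decomposition. Transport everything across $\alpha$: $A\cong L\otimes_S U$ as abelian groups, and I want to describe the subgroup corresponding to $AeA$. The key observation is that $AeA=AeA$ with $e\in S\subset L\cap U$; writing $A=LU$ (product of subsets, via $\alpha$ surjective) one gets $AeA=LUeLU$. Now use that $L$ is lower triangular, so $L=Le\oplus\ce L\ce$ (as right... let me be careful) — in matrix terms $L=\begin{pmatrix}eLe&eL\ce\\0&\ce L\ce\end{pmatrix}$, hence $eL\subseteq Le+\dots$; more usefully, $eA+Ae\supseteq$ everything except $\ce A\ce$-ish pieces. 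I expect the clean statement to be: $AeA$ corresponds, under $\alpha$, to $Le\otimes_S U+L\otimes_S eU$, equivalently the kernel of the composite $L\otimes_S U\to \ce L\ce\otimes_{\ce S\ce}\ce U\ce$ obtained by applying the projections $L\to\ce L\ce$ and $U\to\ce U\ce$. Proving $\alpha^{-1}(AeA)=Le\otimes_S U+L\otimes_S eU$ is the technical heart: the inclusion $\supseteq$ is immediate since $Le\subseteq Ae$ and $eU\subseteq eA$; for $\subseteq$ one writes a general element $x e y$ with $x,y\in A$, expands $x$ and $y$ via the twisted-product decomposition and the triangularity of $L$ and $U$, and checks that every resulting summand lands in $Le\otimes_S U+L\otimes_S eU$ — here one needs that for $x\in A$, $xe\in Ae$ decomposes through $Le$ modulo the correction terms, using that $L$ is lower triangular so that the "$eL\ce$" block, when multiplied appropriately, is absorbed. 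This is where Proposition~\ref{\labelauthor:pro:Mac} enters: applying it to the decomposition $S=eSe\times\ce S\ce$ lets one split tensor products over $S$ into the $e$-part and the $\ce$-part and track exactly which summands survive modulo $AeA$.

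Once $\alpha^{-1}(AeA)=Le\otimes_S U+L\otimes_S eU$ is established, the rest is formal: the quotient
\begin{equation*}
\red A\;\cong\;\frac{L\otimes_S U}{Le\otimes_S U+L\otimes_S eU}\;\cong\;\frac{L}{Le}\otimes_S\frac{U}{eU}\;\cong\;\ce L\ce\otimes_{\ce S\ce}\ce U\ce\;\cong\;\red L\otimes_S\red U,
\end{equation*}
where the middle isomorphism is right-exactness of the tensor product (applied on both sides), the identification $L/Le\cong\ce L\ce$, $U/eU\cong\ce U\ce$ uses $L$ lower triangular and $U$ upper triangular respectively (so $Le=eLe\oplus 0$ complements $\ce L\ce=\ce Le\oplus\ce L\ce$... again a small matrix check: $L/Le\cong\ce L\ce$ as right $S$-modules because $L=Le\oplus L\ce$ and $L\ce=\ce L\ce$ since $eL\ce=0$? no — $L$ lower triangular means $eL\ce=0$, so $L\ce=\ce L\ce$, giving $L=Le\oplus\ce L\ce$), and the last isomorphism is Proposition~\ref{\labelauthor:pro:upper} again. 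Finally one checks this composite isomorphism is exactly the multiplication map $\bar\alpha\colon\red L\otimes_S\red U\to\red A$, which is clear by chasing the definition of $\alpha$.

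**Main obstacle.** The one step requiring genuine care is the identification $\alpha^{-1}(AeA)=Le\otimes_S U+L\otimes_S eU$ — specifically the inclusion $\subseteq$, where an arbitrary product $xey$ must be expanded and every cross-term controlled using the triangularity hypotheses; I would organize this by first reducing to generators $xey$ with $x\in L\cup U$, $y\in L\cup U$, and handling the four cases, the only nontrivial one being $x\in U$, $y\in L$, where the twisting map $T$ and the triangularity of $L$ and $U$ combine to show $xey\in AeA$ has the required decomposition.
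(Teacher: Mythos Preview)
Your approach is essentially the paper's: the key computation is the same, though the paper phrases it as $AeA = LeU$ (a short chain of inclusions using $eL\ce = 0$ and $\ce Ue = 0$), which under the decomposition $L\otimes_S U \cong Le\otimes_{eSe}eU \,\oplus\, L\ce\otimes_{\ce S\ce}\ce U$ from Proposition~\ref{\labelauthor:pro:Mac} is exactly your $\alpha^{-1}(AeA) = Le\otimes_S U + L\otimes_S eU$ (both sides collapse to the summand $Le\otimes_{eSe}eU$ once $e$ is moved across the tensor). Two small points. First, your ``four cases $x,y\in L\cup U$'' is not the right reduction: a generator of $AeA$ is $lu\,e\,l'u'$ with all four factors present, and the paper handles this in a single pass by repeatedly inserting $e+\ce$ and killing terms via triangularity. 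Second, and more substantively, the definition of twisted product requires $\red L$ and $\red U$ to be \emph{subrings} of $\red A$, i.e.\ the natural maps $\red L\to\red A$, $\red U\to\red A$ must be injective; your plan does not address this, and bijectivity of $\bar\alpha$ alone does not give it, since $x\mapsto x\otimes 1$ into a tensor product need not be injective. The paper checks it directly by computing $L\cap AeA = Le = LeL$ (and similarly for $U$ and $S$) from $AeA=LeU$ and the direct-sum splitting. In your framework the same follows once you observe that the composite $\ce L\ce \to \ce L\ce\otimes_{\ce S\ce}\ce U\ce \xrightarrow{\gamma} A$ is the inclusion $\ce L\ce\hookrightarrow A$, but you should say so explicitly.
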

\begin{proof}
To show that $\red{A}$  is a twisted product of $\red{L}$ and $\red{U}$
	we first need to prove that
$\red{S}$ can be considered a subring of  $\red{L}$ and $\red{U}$, and $\red{L}$
and $\red{U}$  can be considered as  subrings
	of $\red{A}$. For this it is enough  to verify that $L \cap AeA = LeL,$
	$U\cap AeA = UeU$ and $S\cap AeA= SeS.$

Using the fact that $eL\ce= \ce U e=0$, we have
\begin{align*}
AeA& = LU e LU = LeUe LU + L\ce U e LU = LeUeLeU + LeUeL\ce U \subset Le A e U
\\& = LeLUeU = LeL e UeU + LeL \ce U e U = LeL e UeU \subset LeU.
\end{align*}
Thus $AeA = LeU$.
Since $S$ is diagonal, it is the direct product of the rings $(eSe,e)$ and $\left(
\ce S \ce,\ce \right)$ and we have an isomorphism of abelian groups (cf.~Propo\-si\-ti\-on  \ref{\labelauthor:pro:Mac})
\begin{equation}\label{\labelauthor:eq:directsum}\begin{array}{ccccc}
		
 \gamma \colon Le \otimes_{eSe}eU \oplus L\ce \otimes_{\ce S\ce} \ce U & \rightarrow &	L\otimes_S U & \xrightarrow{\alpha} &  A\\
 (a\otimes b, a' \otimes b') & \mapsto & (a+a')\otimes( b+b') & \mapsto & ab+a'b'\,.\end{array}
\end{equation}
Therefore
$LeU \cap L\ce U = \gamma \left(Le \otimes_{eSe}eU \right) \cap  \gamma\left(
L\ce \otimes_{\ce S\ce} \ce U \right)= 0$ and so ${Le \cap L\ce U =0}$. Since $L = Le \oplus L\ce$
and $Le \subset LeU$,
this implies $L\cap LeU = Le \cap LeU = Le$. Now
\begin{equation*}
	Le \subset LeL = LeLe \oplus LeL \ce = LeLe \subset Le.
\end{equation*}
Therefore
$
L \cap AeA = L \cap LeU = Le = LeL,
$
as required.

In a similar way it can be proved $U \cap AeA = UeU$. To show that $S \cap AeA=SeS$ it is enough to notice that
\begin{equation*}
S \cap AeA=  S \cap L \cap AeA= S\cap Le=Se\subset SeS= SeSe \oplus SeS\ce=SeSe \subset Se.
\end{equation*}
Thus $S \cap AeA= SeS.$

Next we have to check that the map
\begin{align*}
	\red{\alpha}\colon& \red{L} \otimes_{\red{S}} \red{U} \to \red{A}\\
&[l] \otimes [u] \mapsto  [lu]
\end{align*}
is an isomorphism. By Proposition~\ref{\labelauthor:pro:upper}, we know that $\red{L} \cong \ce L\ce$,
$\red{U}\cong \ce U \ce$ and $\red{S} \cong \ce S\ce$. Therefore, we can replace
 $\red{\alpha}$ by the map
\begin{align*}
	\beta\colon \ce L \ce \otimes_{\ce S \ce } \ce U \ce & \to
	\red{A}\\
	l \otimes u & \mapsto [lu].
\end{align*}
Notice that $L\ce= \ce L \ce \oplus eL\ce = \ce L \ce$ and $\ce U=\ce U \ce \oplus \ce U e = \ce U \ce$. Therefore $\beta $ can be decomposed in the following way:
\begin{equation*}
	\xymatrix{
	L\ce \otimes_{\ce S \ce} \ce U  \ar[r]^-{\beta} \ar[d]_-{\imath} &
	\red{A}\\
	Le \otimes_{e S e} e U \oplus L\ce \otimes_{\ce S \ce} \ce U
	\ar[r]_-{\gamma}^-{\cong} & A \ar[u]_-{\pi}
	}
\end{equation*}
Now $\Ker (\pi)= AeA=LeU = \gamma \left(Le \otimes_{eSe} eU \right),$ which implies $\Ker (\pi \gamma) =Le\otimes_{eSe}eU $. Therefore $\Ker ( \beta)=0$ and $ \beta \left( L\ce\otimes_{\ce S\ce}\ce U\right)=\red{A} $.
\end{proof}

\begin{proposition}
	\label{\labelauthor:pro:reduction}
Let $(A,S)$ be a relative pair with $S$ diagonal and $e\in S$ an idempotent. Suppose that $A$ admits an
$LU$-decomposition with subrings $L$ and~$U$. Then $A\otimes_{U} \red{U} \cong
\red{A}$ as $A$-modules and $\red{L} \otimes_L A \cong \red{A}$ as right
$A$-modules.	
\end{proposition}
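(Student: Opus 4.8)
The plan is to prove the two isomorphisms by a direct computation using the twisted product structure, the identifications established in the proof of Proposition~\ref{\labelauthor:pro:LU}, and Proposition~\ref{\labelauthor:pro:Mac}. I will only spell out the first isomorphism $A\otimes_U \red{U}\cong\red{A}$ as $A$-modules, since the second one follows by the symmetric argument applied to right modules (with the roles of $L$ and $U$ interchanged).

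First I would use that $(A,S)$ is a twisted product of $L$ and $U$, so $\alpha\colon L\otimes_S U\to A$ is an isomorphism of abelian groups, and in particular $A\cong L\otimes_S U$ as a right $U$-module, where $U$ acts on the second factor. Since $S$ is diagonal, $S$ is the direct product of $(eSe,e)$ and $(\ce S\ce,\ce)$, and by Proposition~\ref{\labelauthor:pro:Mac} we get the decomposition from~\eqref{\labelauthor:eq:directsum},
\begin{equation*}
	A\cong L\otimes_S U\cong \bigl(Le\otimes_{eSe}eU\bigr)\oplus\bigl(L\ce\otimes_{\ce S\ce}\ce U\bigr)
\end{equation*}
as right $U$-modules (the right $U$-action passing through the second tensor factors $eU$ and $\ce U$). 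Now I would tensor with $\red{U}=U/UeU$ over $U$. Applying $-\otimes_U\red{U}$ to the first summand gives $Le\otimes_{eSe}(eU\otimes_U\red{U})$; but $eU\otimes_U\red{U}\cong e\red{U}=\overline{e}\,\red{U}$, and since $eU e=0$ would not be available in general, instead I note that in $\red{U}$ the element $e$ maps to $\bar e\cdot[1]$, and $e[u]=[eu]$ lies in $[eU]\subset[UeU]=0$ in $\red{U}$; hence $e\red{U}=0$ and the first summand dies. For the second summand, $\ce U\otimes_U\red{U}\cong\ce\red{U}=\red{U}$ (using $\ce U=\ce U\ce=\red{U}$ up to the isomorphism of Proposition~\ref{\labelauthor:pro:upper}), so $A\otimes_U\red{U}\cong L\ce\otimes_{\ce S\ce}\red{U}$ as abelian groups, with the evident left $L$-action, hence left $A$-action through the identification coming from the twisted product.

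It remains to identify $L\ce\otimes_{\ce S\ce}\red{U}$ with $\red{A}=A/AeA$. Here I would invoke exactly the diagram at the end of the proof of Proposition~\ref{\labelauthor:pro:LU}: the map $\beta\colon L\ce\otimes_{\ce S\ce}\ce U\to\red{A}$, $l\otimes u\mapsto[lu]$, was shown there to be an isomorphism of abelian groups, and using $\ce U=\red{U}$ this is precisely the claimed identification; moreover one checks $\beta$ is $L$-linear, hence $A$-linear once both sides are regarded as $A$-modules via the twisted product (the left $A$-action on $\red A$ is compatible because $AeA$ is a two-sided ideal and $A\to\red A$ is a ring map). Tracking the composite $A\otimes_U\red U\to L\ce\otimes_{\ce S\ce}\red U\xrightarrow{\beta}\red A$ shows it sends $a\otimes[u]\mapsto[au]$, which is manifestly an $A$-module homomorphism and is the natural candidate map; the work above shows it is bijective.

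The main obstacle I anticipate is bookkeeping rather than conceptual: one must be careful that all the intermediate isomorphisms (the Mac Lane decomposition, the identifications $\ce U\ce\cong\red U$ and $\ce L\ce\cong\red L$ from Proposition~\ref{\labelauthor:pro:upper}, and the vanishing $e\red U=0$) are compatible with the module structures being claimed, and in particular that the left $A$-action transported along $\alpha^{-1}$ really does match the left-multiplication action on $\red A$. The cleanest way to handle this is to define the natural map $A\otimes_U\red U\to\red A$, $a\otimes\bar u\mapsto\overline{au}$, directly at the outset, observe it is a well-defined $A$-module homomorphism because $AeA$ is an ideal, and then use all of the above merely to verify injectivity and surjectivity; surjectivity is clear since $A\to\red A$ is already surjective, and injectivity follows from the dimension/cardinality bookkeeping of the direct-sum decomposition together with $\beta$ being an isomorphism. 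The right-module statement is then obtained by the mirror-image argument, using that $(A,S)$ is also a twisted product with $L$ on the right appropriately, $\overline{e}L=0$ in $\red L$ (by $eL\ce$-type considerations), and the analogue of $\beta$.
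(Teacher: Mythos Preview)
Your approach is essentially the same as the paper's: both use Lemma~\ref{\labelauthor:lem:indiso} (or equivalently the right $U$-module isomorphism $A\cong L\otimes_S U$) together with the Mac~Lane decomposition of Proposition~\ref{\labelauthor:pro:Mac}, the identifications $\ce U\ce\cong\red U$, $\ce L\ce\cong\red L$ from Proposition~\ref{\labelauthor:pro:upper}, and the isomorphism $\beta$ from the proof of Proposition~\ref{\labelauthor:pro:LU}. The only cosmetic difference is that you establish $A$-linearity by identifying the composite as the canonical map $a\otimes[u]\mapsto[au]$, whereas the paper checks $L$- and $U$-linearity of $\tau=\beta\psi^{-1}$ separately and invokes the twisted product; one caution is that your phrase ``dimension/cardinality bookkeeping'' for injectivity is misleading for general rings, but your actual argument (the composite coincides with the natural map and is bijective by the chain of abelian-group isomorphisms) is fine.
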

\begin{proof}
	Using Proposition~\ref{\labelauthor:pro:Mac}, we have
	\begin{equation*}
		L\otimes_S  \red{U} \cong Le \otimes_{eSe} e \red{U} \oplus L\ce
		\otimes_{\ce S \ce} \ce \red{U} = L\ce \otimes_{\ce S \ce }
		\red{U}.
	\end{equation*}
As $L$ is lower triangular, we know that $L\ce = \ce L\ce$. Also $\ce U \ce \to
\red{U}$, $u \mapsto [u]$ is an isomorphism by Proposition~\ref{\labelauthor:pro:upper}.
Therefore,
	the map
	\begin{align*}
		\phi\colon \ce L\ce \otimes_{\ce S \ce } \ce U \ce &\to L
		\otimes_S \red{U}\\
		l \otimes u & \mapsto l \otimes [u],
	\end{align*}
	is an isomorphism. We remind the reader that in the proof of
	Lemma~\ref{\labelauthor:lem:indiso} we constructed the isomorphism
	$f_{\red{U}} \colon L\otimes_S \red{U} \to A \otimes_{U} \red{U}$, given
	 by $l\otimes [u] \mapsto l \otimes [u]$. We write $\psi:=f_{\red{U}} \phi$.
Consider the isomorphism
	$\beta\colon \ce L\ce \otimes_{\ce S \ce} \ce U \ce\to \red{A}$ constructed in the proof of Proposition~\ref{\labelauthor:pro:LU} . Then we have the isomorphisms
	\begin{equation*}
		\xymatrix{ &\ce L\ce \otimes_{\ce S \ce} \ce U \ce\ar[ld]_{\beta}
		\ar[rd]^{\psi} & && l\otimes u \ar@{|->}[ld] \ar@{|->}[rd] \\
		\red{A} && A\otimes_U \red{U} & [lu] && l\otimes [u]}
	\end{equation*}
	of abelian groups. 
	Write $\tau := \beta
	\psi^{-1}\colon A\otimes_{U} \red{U} \to \red{A}.$ It is our aim to prove
	that $\tau$ is a homomorphism of $A$-modules. It is obvious that
	$\tau$ is a homomorphism of $L$-modules. Thus, as $A$ is the twisted product of $L$ and $U$, to prove that $\tau$ is a homomorphism of $A$-modules it is enough
	to show that $\tau$ is a homomorphism of $U$-modules. For this, let
	$u'\in U$ and $l\otimes [u]\in A\otimes_{U} \red{U}$. Then, as $u'l \in A$, we have $u'l =\alpha\left(\sum_{i\in I} l_i\otimes  u_i\right)=
	\sum_{i\in I} l_i u_i$, for some
	 finite set $I$, $l_i\in L$ and $u_i \in U$. Thus
	\begin{align*}
	\tau (u'( l\otimes [u]))& = \tau \left( \sum_{i\in I} l_i u_i \otimes
	[u]\right)= \sum_{i \in I} \tau \left(l_i \otimes [u_i u]\right) \\[2ex]&=
	\sum_{i\in I} [l_i u_i u]=
	 \left[\left( \sum_{i\in I}l_i u_i\right)
	u\right] =  [u'lu]=u'	\tau (l \otimes [u] ).
	\end{align*}
	Therefore $A\otimes_U \red{U} \cong \red{A}$ as $A$-modules. Applying
	this result to the opposite algebras, we conclude that $\red{L} \otimes_L
	A \cong \red{A}$ as right $A$-modules.
\end{proof}

We  are now ready to prove the main result of the article.
\begin{theorem}
	\label{\labelauthor:teo:main}
Let $(A,S)$ be a relative pair with $S$ diagonal and $e\in S$ an idempotent.
Suppose that  $(A,S)$ admits an $LU$-decomposition with subrings $L$ and $U$. Then  $AeA$ is
an $(A,S)$-stratifying ideal.
\end{theorem}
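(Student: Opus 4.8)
The plan is to compute $\Tor^{(A,S)}_\bullet(\red A,\red A)$ by means of a convenient $(A,S)$-projective resolution of $\red A$ supplied by the $LU$-decomposition, and then to recognise the resulting complex as, up to isomorphism, a bar resolution over $\red A$. First I would combine Proposition~\ref{\labelauthor:pro:reduction}, which gives $\red A\cong A\otimes_U\red U$ as $A$-modules, with Proposition~\ref{\labelauthor:pro:indbar} applied to the twisted product $(A,S)$ of $L$ and $U$ and to the $U$-module $\red U$: the latter says that $A\otimes_U B(U,S,\red U)$ is an $(A,S)$-projective resolution of $\red A$. Hence, by \eqref{\labelauthor:eq:toresolution},
\begin{equation*}
\Tor^{(A,S)}_k(\red A,\red A)\cong H_k\bigl(\red A\otimes_A A\otimes_U\barr(U,S,\red U)\bigr)=H_k\bigl(\red A\otimes_U\barr(U,S,\red U)\bigr).
\end{equation*}

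The heart of the argument is then an isomorphism of complexes
\begin{equation*}
\red A\otimes_U B(U,S,\red U)\;\cong\;\red A\otimes_{\red U}B(\red U,\red S,\red U),
\end{equation*}
which in degree $k$ sends $[a]\otimes u_0\otimes\dots\otimes u_k\otimes[u]$ to $[a]\otimes[u_0]\otimes\dots\otimes[u_k]\otimes[u]$. That this is a chain map will follow from functoriality of the bar construction with respect to the quotient morphism of relative pairs $(U,S)\to(\red U,\red S)$ — here one checks $S\cap UeU=SeS$, so that $\red S$ is genuinely a subring of $\red U$ — together with the identity $\red A\otimes_U N\cong\red A\otimes_{\red U}N$ for $\red U$-modules $N$, which holds because $\red A\cdot UeU=0$. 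The delicate point, which I expect to be the main obstacle, is that this chain map is a \emph{levelwise isomorphism}. Since $S$ is diagonal, $\red A$ is annihilated by $e$ on the right and $\red U$ by $e$ on both sides, and since $U$ is upper triangular, $\ce U=\ce U\ce$; so once $\otimes_U$ has absorbed the first tensor factor, each remaining copy of $U$ collapses onto $\ce U\ce$, each $\otimes_S$ becomes $\otimes_{\ce S\ce}$, and one finishes by invoking the identifications $\red U\cong\ce U\ce$ and $\red S\cong\ce S\ce$ of Proposition~\ref{\labelauthor:pro:upper}. Tracking these reductions through all tensor factors and confirming compatibility with each face map $d_{k,j}$ is the part of the proof that requires genuine care; the rest is formal.

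Finally, Proposition~\ref{\labelauthor:pro:LU} asserts that $(\red A,\red S)$ is a twisted product of $\red L$ and $\red U$, so applying Proposition~\ref{\labelauthor:pro:indbar} to this twisted product and to the $\red U$-module $\red U$ shows that $\red A\otimes_{\red U}B(\red U,\red S,\red U)$, whose $(-1)$-term is $\red A\otimes_{\red U}\red U=\red A$, is an $(\red A,\red S)$-projective resolution of $\red A$ — in particular an exact complex. Combining the three observations gives
\begin{equation*}
\Tor^{(A,S)}_k(\red A,\red A)\cong H_k\bigl(\red A\otimes_{\red U}\barr(\red U,\red S,\red U)\bigr)=0\qquad\text{for all }k\ge 1,
\end{equation*}
which is the statement of the theorem. (For $k=0$ the same identifications give $\Tor^{(A,S)}_0(\red A,\red A)\cong\red A$, consistent with $\red A\otimes_A\red A\cong\red A$.)
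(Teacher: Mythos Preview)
Your proposal is correct and follows essentially the same route as the paper: build an $(A,S)$-projective resolution of $\red A$ via $A\otimes_U B(U,S,\red U)$ using Propositions~\ref{\labelauthor:pro:reduction} and~\ref{\labelauthor:pro:indbar}, tensor with $\red A$, and then identify the resulting complex with $\red A\otimes_{\red U}B(\red U,\red S,\red U)$, which is exact by Propositions~\ref{\labelauthor:pro:LU} and~\ref{\labelauthor:pro:indbar}. The paper carries out the levelwise isomorphism exactly as you anticipate, by repeatedly splitting each $\otimes_S$ via $S\cong eSe\times\ce S\ce$ and observing that the conditions $\red Ae=0$ and $\ce Ue=0$ kill every summand except the one indexed by $(\ce,\dots,\ce)$, reducing each tensor factor to $\ce U\ce\cong\red U$.
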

\begin{proof}
	We can consider $\red{U}$ as a $U$-module and so, by
	Proposition~\ref{\labelauthor:pro:indbar}, the complex $A\otimes_U B(U,S,\red{U})$ is an
	$(A,S)$-projective resolution of $A \otimes_{U}\red{U}$. We know, from
	Proposition~\ref{\labelauthor:pro:reduction}, that $A\otimes_{U} \red{U}\cong
	\red{A}$ as $A$-modules. Therefore, $A\otimes_U B(U,S,\red{U})$ gives an
	$(A,S)$-projective resolution of $\red{A}$.

	If we show that  the complex $\red{A}\otimes_A A \otimes_U
	B(U,S,\red{U})$ is exact, then $$\Tor^{(A,S)}_k(\red{A}, \red{A}) {= 0},$$  for all $k \geq 1$, and $AeA$ is
an $(A,S)$-stratifying ideal.
We have an obvious isomorphism of complexes
	\begin{equation*}	
		\red{A}\otimes_A A \otimes_U
		B(U,S,\red{U}) \cong \red{A} \otimes_{U} B(U,S,\red{U}).
	\end{equation*}
	Therefore to prove the theorem it is enough to check that $\red{A} \otimes_U B(U,S,\red{U})$ is
	exact.
Consider the maps
\begin{align*}
	\phi_{-1} \colon \red{A} \otimes_{U} \red{U} & \to \red{A}
	\otimes_{\red{U}} \red{U}&
	\\[2ex]
	[a] \otimes [u] & \mapsto [a] \otimes [u] &
\\[2ex]
\phi_k\colon \red{A} {\otimes}_U U^{{\otimes}_S^{(k+1)}
	} \otimes_S
	\red{U} & \to \red{A}\otimes_{\red{U}}
	\red{U}^{{\otimes}_{\red{S}}^{(k+1)} } \otimes_{\red{S}} \red{U}\\[2ex]
[a] \otimes u_0 \otimes \dots\otimes u_k \otimes [u]
		&\mapsto [a] \otimes [u_0] \otimes \dots \otimes [u_k] \otimes
		[u],\qquad k\ge 0.
\end{align*}
It is straightforward to verify that $\phi:=(\phi_k)_{k\ge -1}$ is a well-defined
homomorphism of chain complexes from $\red{A}\otimes_{U} B(U,S,\red{U})$ to
$\red{A}\otimes_{\red{U}} B(\red{U}, \red{S}, \red{U})$.
By Propositions~\ref{\labelauthor:pro:indbar} and~\ref{\labelauthor:pro:LU}, the complex
	$\red{A}\otimes_{\red{U} } B(\red{U}, \red{S}, \red{U})$ is an
	$(\red{A}, \red{S})$-projective resolution of $\red{A}$, and, in particular,
	it   is exact.
To finish the proof we will show that $\phi$ is an isomorphism, that is that for every
$k\ge -1$ the map $\phi_k$ is an isomorphism.

The map $\phi_{-1}$ is an isomorphism,
	since both $\red{A}$ and $\red{U}$ get the structure of right and left
	$U$-modules, respectively, via the projection $U\to \red{U}$.

Suppose now that $k\ge 0$. Then
	\begin{align*}
		\red{A} \otimes_U U^{\otimes_S ^{(k+1)}} \otimes_S \red{U}& \cong
		\red{A} \otimes_S U^{\otimes_S^k} \otimes_S \red{U}\\
\red{A} \otimes_{\red{U}} {\red{U}}^{\otimes_{\red{S}}^ {(k+1)}} \otimes_{\red{S}} \red{U}& \cong
		\red{A} \otimes_{\red{S}} {\red{U}}^{\otimes_{\red{S}}^ k}
		\otimes_{\red{S}} \red{U}.
	\end{align*}
	Under these isomorphisms $\phi_k$ corresponds to
	\begin{equation*}
		\psi_k\colon \red{A} \otimes_S U^{\otimes_S^ k} \otimes_S \red{U}
\to \red{A} \otimes_{\red{S}} {\red{U}}^{\otimes_{\red{S}}^ k}
		\otimes_{\red{S}} \red{U}.
	\end{equation*}
We will write  for a moment $e_1 = e$, $e_2 = \ce$, $S_1 = e_1 S e_1$ and
$S_2 = e_2Se_2$. As $S $ is diagonal,   using repeatedly  Proposition~\ref{\labelauthor:pro:Mac}, we get
\begin{equation*}
	\red{A} \otimes_S U^{\otimes_S^k} \otimes_S \red{U} \cong
\!\!\!\!\!	\bigoplus_{(i_0,\dots,i_k)\in \left\{ 1,2 \right\}^{k+1}}\!\!\!\!\!
	\red{A}e_{i_0} \otimes_{S_{i_0}} e_{i_0} U e_{i_1} \otimes_{S_{i_1}}
	\dots \otimes_{S_{i_{k-1}}}
	e_{i_{k-1}} U e_{i_k} \otimes_{S_{i_k}} e_{i_k} \red{U}.
\end{equation*}
Since $\red{A} e_1 =0$ and $ e_2 U e_1 = 0$, one sees that the only
non-zero summand in the above direct sum corresponds to the multi-index
$(2,2,\dots,2)$. Therefore, as $\red{A}e_2= \red{A}$ and $ e_2\red{U} = \red{U}$, we have that
\begin{align*}
	\psi_k\colon \red{A} \otimes_{S_2} \left(\ce U
	\ce\right)^{\otimes_{S_2}^k}
	\otimes_{S_2} \red{ U }
 &\to \red{A} \otimes_{\red{S}} {\red{U}}^{\otimes_{\red{S}}^k}
		\otimes_{\red{S}} \red{U}\\[2ex]
		[a] \otimes u_1 \otimes \dots \otimes u_k \otimes [u] &\mapsto
		[a]\otimes [u_1]\otimes \dots \otimes [u_k] \otimes [u].
\end{align*}
But these maps are isomorphisms, since  $S_2 = \ce S \ce \cong
\red{S}$ and $\ce U \ce \cong \red{U}$,  by
Proposition~\ref{\labelauthor:pro:upper}.
\end{proof}
Let $\K$ be a field. Suppose that $A$ is a finite dimensional algebra over
$\K$ and
 $e\in A$ is an idempotent  such that
$AeA$ is a projective left or right $A$-module. Then,  as we mentioned in the introduction, $AeA$ is a stratifying
ideal. Next we give an example of a finite dimensional  $\K$-algebra $A$, with an idempotent
$e$, such that $AeA$ is not projective, although it is a stratifying ideal by Theorem~\ref{\labelauthor:teo:main}.

\begin{example} Given two rings $A$, $B$, and an  $A$-$B$-bimodule $M$, we have
a ring structure on $R:=A\oplus M \oplus B$, given by
\begin{align*}
	(a_1,m_1,b_1) (a_2,m_2,b_2) := (a_1a_2, a_1m_2 + m_1 b_2, b_1b_2).
\end{align*}
The ring $R$ is upper triangular with respect  the idempotent $(1_A,0,0)$,
\begin{equation*}
	R=	\left(
	\begin{array}{cc}
		A & M\\
		0 & B
	\end{array}
	\right),
\end{equation*}
and is a lower triangular ring
with respect the idempotent $(0,0,1_B)$,
\begin{equation*}
	R=	\left(
	\begin{array}{cc}
		B & 0\\
		M & A
	\end{array}
	\right).
\end{equation*}
Let $A := \K$, $B :=
\left.\raisebox{0.3ex}{$\K[x]$}\middle/\!\raisebox{-0.3ex}{$\left( x^2
\right)$}\right. $, $M
:=\left.\raisebox{0.3ex}{$B$}\middle/\!\raisebox{-0.3ex}{$xB$}\right.$. Note
that $M$ is a one-dimensional  $A$-$B$-bimodule. Let  $v$ be a  generator of
$M$.  Then we
get a lower triangular ring
\begin{equation*}
	L:= 	\left(
	\begin{array}{cc}
		\left.\raisebox{0.3ex}{$\K[x]$}\middle/\!\raisebox{-0.3ex}{$\left(
		x^2 \right)$}\right.
	& 0 \\[2ex]
	\left\langle v \right\rangle
	& \K
	\end{array}
	\right).
\end{equation*}
Taking
$A:=\left.\raisebox{0.3ex}{$\K[y]$}\middle/\!\raisebox{-0.3ex}{$\left( y^2
\right)$}\right. $, $M :=
\left.\raisebox{0.3ex}{$A$}\middle/\!\raisebox{-0.3ex}{$Ay$}\right.$,
$B:=\K$,
once more we have that $M$ is a one-dimensional $A$-$B$-bimodule. We denote its generator by
$w$.
We get an upper triangular ring
\begin{equation*}
	U: = \left(
\begin{array}{cc}
	\left.\raisebox{0.3ex}{$\K[y]$}\middle/\!\raisebox{-0.3ex}{$\left(
	y^2 \right)$}\right.
	&
\left\langle w \right\rangle	
\\[2ex] 0 & \K	
\end{array}
\right). \end{equation*}
Note that both $L$ and $U$ contain the semisimple subring
\begin{equation*}
S : =
\left(
\begin{array}{cc}
	\K & 0 \\[2ex] 0 & \K
\end{array}
\right).
\end{equation*}
Let $e=\left(\begin{smallmatrix} 1 & 0 \\ 0 & 0 \end{smallmatrix}\right)\in S$ and
	$\red{e} = 1_S -e = \left(\begin{smallmatrix} 0 & 0 \\ 0 & 1
	\end{smallmatrix}\right)$. Then
applying Proposition~\ref{\labelauthor:pro:Mac}, we get
\begin{equation*}
	L\otimes_S U =
	\left(
	\begin{array}{cc}
		eL\otimes_S U e & eL\otimes_S U \red{e}\\
		\red{e} L\otimes_S U e & \red{e} L \otimes_S U\red{e}
	\end{array}
	\right)=
	\left(
	\begin{array}{cc}
		\left\langle e_{11}, x, y, x y \right\rangle &
		\left\langle w, xw \right\rangle \\[2ex]
		\left\langle v, vy \right\rangle &
		\left\langle vw, e_{22} \right\rangle
	\end{array}
	\right).
\end{equation*}
In the above formula we omitted $\otimes$ between the elements in $L$ and
$U$, wrote
$e_{ii}$ for the products $1\otimes 1$ at position $(i,i)$, and
 abbreviated $x\otimes 1$ by $x$, $1\otimes y$ by $y$, $1\otimes w$ by
 $w$, $v\otimes 1$ by $v$.

 We will define a multiplication in  the vector space $A=L\otimes_S U$,
such that  $$l \otimes u= (l \otimes 1_U)\cdot (1_L \otimes u).$$ Then $A$ will be a twisted product of the subalgebras $L\otimes_S S \cong L$ and
$S\otimes_S U\cong U$. To define such product it is enough to define the images of the
elements of $U\otimes_S L$ under the map
\begin{equation*}
	\tau  \colon  U\otimes_S L \xrightarrow{\cong} S\otimes_S U \otimes_S L\otimes_S S
	\rightarrow A\otimes_S A \xrightarrow{\mu_A} A.
\end{equation*}
Moreover, the above map restricted to the subspaces $S\otimes_S L$ and $U\otimes_S
S$, should be $s\otimes l\mapsto sl\otimes 1_U$ and $u\otimes s\mapsto 1_L
\otimes us$.  Applying
Proposition~\ref{\labelauthor:pro:Mac}, we get
\begin{align*}
U\otimes_S L& =
\left(
\begin{array}{cc}
	eU\otimes_S Le & eU\otimes_S L \red{e}\\[2ex]
	\red{e}U\otimes_S L e & \red{e} U\otimes_S L \red{e}
\end{array}
\right) \\[3ex]&=
\left(
\begin{array}{cc}
\left\langle 1\otimes 1, y\otimes 1, 1\otimes x, y\otimes x,w\otimes v
\right\rangle
&  \left\langle w\otimes 1\right\rangle\\[2ex]
\left\langle 1\otimes v\right\rangle &\left\langle  1\otimes 1\right\rangle
\end{array}
\right).
\end{align*}
So we only need to know $\tau(y\otimes x)$ and $\tau(w\otimes v)$.
Define
\begin{align*}
\tau(y\otimes x)	 & = xy , & \tau(w\otimes v) &= 0.
\end{align*}
It is easy to check that the resulting multiplication in $A = L\otimes_S U$ is
associative. By construction $A$ is a twisted product of subalgebras $L\otimes_S
S$ and $S\otimes_S U$. Thus we can apply Theorem~\ref{\labelauthor:teo:main} to $A$, and we get
that $AeA$ is a stratifying ideal. By direct computation one verifies that
\begin{align*}
	\dim(Ae) & = 6, & \dim(A\red{e}) & = 4, & \dim(Ae A) &= 9.
\end{align*}
Since $e$ and $\red{e}$ are primitive idempotents, the modules $Ae$ and $A\red{e}$ are indecomposable projective. Therefore we see that
$Ae A$ is not projective, as $9$ can not be represented as an integral
combination of $6$ and $ 4$.
Thus $AeA$ is an example of a non-projective stratifying ideal.
\end{example}


\begin{thebibliography}{10}

\bibitem{\labelauthor:bib:auslander}
M.~Auslander, M.~I. Platzeck, and G.~Todorov.
\newblock Homological theory of idempotent ideals.
\newblock {\em Trans. Amer. Math. Soc.}, 332(2):667--692, 1992.

\bibitem{\labelauthor:bib:beck}
Jon Beck.
\newblock Distributive laws.
\newblock In {\em Sem. on {T}riples and {C}ategorical {H}omology {T}heory
  ({ETH}, {Z}\"urich, 1966/67)}, pages 119--140. Springer, Berlin, 1969.

\bibitem{\labelauthor:bib:cap}
Andreas Cap, Hermann Schichl, and Ji{\v{r}}{\'{\i}} Van{\v{z}}ura.
\newblock On twisted tensor products of algebras.
\newblock {\em Comm. Algebra}, 23(12):4701--4735, 1995.

\bibitem{\labelauthor:bib:cps88}
E.~Cline, B.~Parshall, and L.~Scott.
\newblock Finite-dimensional algebras and highest weight categories.
\newblock {\em J. Reine Angew. Math.}, 391:85--99, 1988.

\bibitem{\labelauthor:bib:cps96}
Edward Cline, Brian Parshall, and Leonard Scott.
\newblock Stratifying endomorphism algebras.
\newblock {\em Mem. Amer. Math. Soc.}, 124(591):viii+119, 1996.

\bibitem{\labelauthor:bib:quantised}
Stephen Donkin, Ana~Paula Santana, and Ivan Yudin.
\newblock Homological properties of quantised {B}orel-{S}chur algebras and
  resolutions of quantised {W}eyl modules.
\newblock {\em J. Algebra}, 402:120--157, 2014.

\bibitem{\labelauthor:bib:geigle}
Werner Geigle and Helmut Lenzing.
\newblock Perpendicular categories with applications to representations and
  sheaves.
\newblock {\em J. Algebra}, 144(2):273--343, 1991.

\bibitem{\labelauthor:bib:hochschild}
G.~Hochschild.
\newblock Relative homological algebra.
\newblock {\em Trans. Amer. Math. Soc.}, 82:246--269, 1956.

\bibitem{\labelauthor:bib:homology}
Saunders MacLane.
\newblock {\em Homology}.
\newblock Springer-Verlag, Berlin-New York, first edition, 1967.
\newblock Die Grundlehren der mathematischen Wissenschaften, Band 114.

\bibitem{\labelauthor:bib:advances}
Ana~Paula Santana and Ivan Yudin.
\newblock Characteristic-free resolutions of {W}eyl and {S}pecht modules.
\newblock {\em Adv. Math.}, 229(4):2578--2601, 2012.

\bibitem{\labelauthor:bib:szep}
J.~Sz{\'e}p.
\newblock On the structure of groups which can be represented as the product of
  two subgroups.
\newblock {\em Acta Sci. Math. Szeged}, 12(Leopoldo Fejer et Frederico Riesz
  LXX annos natis dedicatus, Pars A):57--61, 1950.

\bibitem{\labelauthor:bib:zappa}
Guido Zappa.
\newblock Sulla costruzione dei gruppi prodotto di due dati sottogruppi
  permutabili tra loro.
\newblock In {\em Atti {S}econdo {C}ongresso {U}n. {M}at. {I}tal., {B}ologna,
  1940}, pages 119--125. Edizioni Cremonense, Rome, 1942.

\end{thebibliography}
\end{document}